\documentclass[preprint,12pt]{elsarticle}
\usepackage{amsthm,amsmath,amssymb}
\usepackage[colorlinks=true,citecolor=black,linkcolor=black,urlcolor=blue]{hyperref}
\usepackage{graphicx}
\usepackage{float}
\usepackage{mathtools}
\usepackage{epstopdf}
\usepackage{epsfig}
\usepackage{subfigure}
\usepackage{enumitem}
\usepackage{tikz}
\usepackage{nicematrix}
\usetikzlibrary{calc,decorations.pathmorphing,arrows.meta,fit,shapes.geometric}
\usepackage{arydshln}
\usepackage{blkarray}

\newcommand{\arxiv}[1]{\href{http://arxiv.org/abs/#1}{\texttt{arXiv:#1}}}

\theoremstyle{plain}
\newtheorem{theorem}{Theorem}
\newtheorem{lemma}[theorem]{Lemma}

\newtheorem{proposition}[theorem]{Proposition}

\theoremstyle{definition}
\newtheorem{definition}[theorem]{Definition}

\newtheorem{example}[theorem]{Example}

\newtheoremstyle{myremark}
{3pt}   
{3pt}   
{\normalfont} 
{}      
{\bfseries} 
{.}     
{.5em}  
{}

\theoremstyle{myremark}
\newtheorem{remark}{Remark}

\newcommand{\GF}{\mathrm{GF}}

\newcommand{\ptr}{P^{\times}}

\title{Recurrence and coefficient inequality for the partial Petrial polynomial of graphs}

\author{Ruiqing Feng$^{1}$, Xia Guo$^{2}$,
Qi Yan$^{1}$\footnote{Corresponding author.}\\
\small $^{1}$ School of Mathematics and Statistics, Lanzhou University, PR China\\
\small $^{2}$ Xi'an Research Institute of High-Tech Hongqing Town, PR China\\
\small{\tt Email: fengrq2024@lzu.edu.cn; xguomath@163.com; yanq@lzu.edu.cn}\\
}

\date{}
\journal{}

\begin{document}
\begin{frontmatter}

\begin{abstract}
The partial Petrial polynomial of a ribbon graph, introduced by Gross, Mansour and Tucker, enumerates partial Petrials by Euler genus. Recently, Deng, Jin and Yan defined an analogue for grafts and showed that it can be expressed as a rank-generating function of an adjacency matrix. In this paper we first prove a recurrence relation that reduces the partial Petrial polynomial of a graph with respect to an arbitrary edge, expressing it as a sum of three terms involving graphs obtained by local complementation and edge pivoting. This recurrence extends the known leaf-reduction formula to vertices of any positive degree. Second, using this recurrence we compare the lowest and highest degree coefficients of the polynomial. We prove that the lowest coefficient is always at most the highest coefficient, and that equality holds if and only if the graph has no edges.
\end{abstract}

\begin{keyword}
partial Petrial \sep graft \sep local complementation \sep edge pivot
\end{keyword}

\end{frontmatter}

\section{Introduction}

The Petrial operation, introduced by Wilson \cite{Wilson1979} in the study of regular maps, is a duality operation that replaces each edge with a new edge that is orthogonal to the original edge with respect to the embedding. When applied only to a given subset of edges, one obtains a partial Petrial \cite{EllisMoffatt2012}. Gross, Mansour and Tucker \cite{GrossMansourTucker2021} introduced partial-twuality polynomials for ribbon graphs, including the partial Petrial polynomial, as generating functions that enumerate the Euler genus of all partial Petrials. These polynomials have since been investigated from various perspectives, including algebraic, combinatorial and matrix approaches.

Recently, Yan and Jin \cite{YanJin2024} studied partial-twuality polynomials of delta-matroids via binary matrices and their associated intersection graphs. Yan and Li \cite{YanLi2025} obtained explicit formulas for complete graphs and paths. Deng, Jin and Yan \cite{DengJinYan2026} developed a unified matrix framework for these polynomials, establishing product formulas, degree bounds and reduction formulas. In particular, they showed that the partial Petrial polynomial of a graft, a pair consisting of a simple graph and a vertex subset, can be studied through the rank of a certain adjacency matrix. Yu, Hao, Liu and Li \cite{YuHaoLiuLi2026} gave a rank-based expression for the partial Petrial polynomial of ribbon graphs and established a four-term relation.

Local complementation and edge pivots, which originated in the work of Kotzig \cite{Kotzig1968} and Bouchet \cite{Bouchet1988}, play a key role in the proofs of our main results. Edge pivots were independently rediscovered by Arratia, Bollobás and Sorkin in the context of the interlace polynomial \cite{ArratiaBollobasSorkin2000,ArratiaBollobasSorkin2004}. Moffatt \cite{Moffatt2019,Moffatt2023} gave a comprehensive exposition of their connections to delta-matroids and matrix pivots. Moreover, Feng, Yan and Zheng \cite{FengYanZheng2026} recently used grafts and local complementation to characterize circle graphs with binomial partial Petrial polynomials.

Our first main result is a recurrence for the partial Petrial polynomial of a simple graph with respect to an arbitrary edge. This recurrence expresses the polynomial of a graph in terms of polynomials of three smaller graphs obtained via local complementation and edge pivoting. It extends a known leaf-reduction formula \cite{DengJinYan2026} to vertices of arbitrary degree.

Using this recurrence, we then compare the two extremal coefficients of the polynomial. Denote by $\ell_G$ and $h_G$ the coefficients of the lowest and highest degree terms, respectively. We prove that $\ell_G\le h_G$ for every simple graph $G$, and that equality holds if and only if $G$ has no edges. This gives a  characterisation of when the polynomial is symmetric (i.e., its coefficients form a palindromic sequence).

The paper is organised as follows. Section~2 recalls the necessary definitions, including ribbon graphs, grafts, local complementation and edge pivots, as well as the principal pivot transform. Section~3 states and proves the recurrence, discusses the leaf case and verifies the formula for complete graphs. Section~4 establishes the inequality between the extremal coefficients and classifies the equality case.

\section{Preliminaries}

Throughout this paper, all graphs are simple, i.e., without loops or multiple edges. All matrices and matrix operations are taken over the binary field $\GF(2)$. 

\subsection{Partial Petrial polynomials of ribbon graphs and grafts}

We begin by briefly introducing ribbon graphs, which provide an equivalent description of cellularly embedded graphs.

\begin{definition}[\cite{BollobasRiordan2002}]
A {\it ribbon graph} $G=(V(G), E(G))$ is a $($orientable or non-orientable$)$ surface with boundary,
represented as the union of two sets of topological discs: a set $V(G)$ of vertices and a set $E(G)$ of edges with the following properties.
\begin{description}
\item[\rm (1)] The vertices and edges intersect in disjoint line segments.
\item[\rm (2)] Each such line segment lies on the boundary of precisely one vertex and precisely one edge.
\item[\rm (3)] Every edge contains exactly two such line segments.
\end{description}
\end{definition}

\begin{definition}[\cite{EllisMoffatt2012}]
Let \(G\) be a ribbon graph and
\(A\subseteq E(G)\). The \emph{partial Petrial} of \(G\) with respect to
\(A\), denoted \(G^{\times|A}\), is the ribbon graph obtained from \(G\) by
adding a half-twist to each edge in \(A\).
\end{definition}

The partial Petrial polynomial was originally defined for ribbon graphs as follows.

\begin{definition}[\cite{GrossMansourTucker2021}]
\label{def:ribbon-partial-petrial-polynomial}
The \emph{partial Petrial polynomial}
\({}^{\partial}\varepsilon^{\times}_{G}(z)\) of a ribbon graph \(G\) is the
generating function
\[
{}^{\partial}\varepsilon^{\times}_{G}(z)
:=
\sum_{A\subseteq E(G)}
z^{\varepsilon(G^{\times|A})},
\]
where \(\varepsilon(G^{\times|A})\) denotes the Euler genus of
\(G^{\times|A}\).
\end{definition}

A \emph{graft} \((G,L_G)\) consists of a simple graph \(G\) and a subset
\(L_G\subseteq V(G)\). The adjacency matrix \(\mathbf{A}(G,L_G)\) of a graft
\((G,L_G)\) is the matrix over \(\GF(2)\) whose rows and columns are indexed
by the vertices of \(G\). For distinct vertices \(u,v\), the \((u,v)\)-entry
is \(1\) if \(u\) and \(v\) are adjacent in \(G\), and \(0\) otherwise.
The \((v,v)\)-entry is \(1\) if \(v\in L_G\), and \(0\) otherwise.
When \(L_G=\emptyset\), we simply write \(\mathbf{A}_G:=\mathbf{A}(G,\emptyset)\)
and call it the adjacency matrix of \(G\). 

Recently, Deng, Jin and Yan \cite{DengJinYan2026} introduced the partial Petrial polynomial for grafts.

\begin{definition}[\cite{DengJinYan2026}]
\label{def:graft-partial-petrial-polynomial}
Let \((G,L_G)\) be a graft. The \emph{partial Petrial polynomial} of
\((G,L_G)\) is defined by
\[
P^\times_{(G,L_G)}(z)
:=
\sum_{F\subseteq V(G)}
z^{\operatorname{rank}\mathbf{A}(G,L_G\Delta F)},
\]
where \(L_G\Delta F\) denotes the symmetric difference of \(L_G\) and
\(F\).
\end{definition}

Let $e$ be an edge of a ribbon graph $G$. If $e$ is a loop at the vertex disc $v$ and $e\cup v$ is homeomorphic
to a M\"obius band, then we call $e$ a {\it non-orientable loop}. Otherwise it is said to be an {\it orientable loop}. A \emph{bouquet} is a ribbon graph having exactly one vertex. Two loops in a bouquet are said to be \emph{interlaced} if, when travelling along the boundary of the unique vertex, their ends are encountered in an alternating order. The \emph{intersection graph} \(I(B)\) of a bouquet \(B\) is the simple graph whose vertex set is \(E(B)\), and in which two vertices (corresponding to loops of \(B\)) are adjacent if and only if the corresponding loops are interlaced in \(B\). Given a bouquet \(B\), let \(L_{I(B)} \subseteq E(B)\) be the set of its non-orientable loops. Then the graft \(\bigl(I(B),\, L_{I(B)}\bigr)\) is the \emph{intersection graft} of \(B\). 

\begin{proposition}[\cite{DengJinYan2026}]
\label{thm:bouquet-intersection-graft-polynomial}
Let \(B\) be a bouquet, and let \((I(B),L_{I(B)})\) be its intersection graft.
Then
\[
{}^{\partial}\varepsilon^{\times}_{B}(z)
=
P^\times_{(I(B),L_{I(B)})}(z).
\]
\end{proposition}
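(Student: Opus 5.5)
The plan is to match the two sums term by term. Partial Petrials of $B$ correspond to subsets $A\subseteq E(B)$, and partial Petrials of the graft correspond to subsets $F\subseteq V(I(B))=E(B)$. I will take $F=A$ and show that for every $A$,
\[
\varepsilon(B^{\times|A})=\operatorname{rank}\mathbf{A}\bigl(I(B),L_{I(B)}\Delta A\bigr).
\]
Summing $z^{(\cdot)}$ over all $A$ then gives the proposition.

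The first step is to describe $B^{\times|A}$. Adding a half-twist to a loop leaves the cyclic order of its ends around the vertex unchanged. So $B^{\times|A}$ is again a bouquet, with the same chord diagram and the same intersection graph $I(B)$. Only the orientability of loops changes: a loop of $B^{\times|A}$ is non-orientable exactly when it lies in $L_{I(B)}\Delta A$. Hence the intersection graft of $B^{\times|A}$ is $(I(B),L_{I(B)}\Delta A)$. This reduces the claim to a single statement: for every bouquet $B'$,
\[
\varepsilon(B')=\operatorname{rank}\mathbf{A}\bigl(I(B'),L_{I(B')}\bigr)
\quad\text{over }\GF(2).
\]

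To prove this, I compute $\varepsilon(B')=2k(B')-v+e-f=1+e-f$, where $v=1$, $k(B')=1$, $e=|E(B')|$ and $f$ is the number of boundary components. It therefore suffices to show
\[
f(B')=1+e-\operatorname{rank}\mathbf{A},
\]
that is, $f-1$ equals the nullity of $\mathbf{A}=\mathbf{A}(I(B'),L_{I(B')})$. This is the classical result relating boundary components of a one-vertex ribbon graph to the corank of its interlace matrix with diagonal marking non-orientable loops; Cohn--Lempel and Moran--Bouchet give it for orientable chord diagrams, and the twisted version is due to Bouchet.

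I would prove it by induction on $e$, following Bouchet's local-complementation argument. If some loop is non-orientable (a diagonal $1$), or two loops interlace (an off-diagonal $1$), I perform a suitable ribbon-graph move. For a twisted loop $a$ this is a partial dual along $a$ followed by contracting back to a bouquet; equivalently, I delete $a$ after local complementation at $a$. For an interlaced pair $a,b$ the move is the corresponding pivot. In each case the number of faces must change in step with the rank of the Schur complement: for a pivot on a diagonal $1$ the rank drops by $1$, and for a pivot on a $2\times2$ block $\begin{pmatrix}0&1\\1&0\end{pmatrix}$ it drops by $2$. The base case is a bouquet with all loops orientable and pairwise non-interlaced. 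This is a plane bouquet with $f=e+1$ and $\mathbf{A}=0$, so the identity holds.

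I expect the main obstacle to be this topological step: checking that removing a twisted loop $a$ and replacing the others by their local complements at $a$ changes $f$ by exactly the right amount. The same check is needed for removing an interlaced orientable pair under a pivot. I would verify it by tracing boundary walks through the relevant loop ends, splitting according to whether the other ends lie in the arcs cut out by $a$ (and $b$). Alternatively, I would cite the known equality $\varepsilon=\operatorname{rank}$ from the delta-matroid representation of bouquets.
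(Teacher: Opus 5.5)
The paper does not prove this proposition; it quotes it from Deng, Jin and Yan, so there is no in-text argument to compare yours with. Your proof is correct. The first step is the right reduction. A half-twist does not move the ends of a loop, so $B^{\times|A}$ has intersection graft $(I(B),L_{I(B)}\Delta A)$. Everything then comes down to the single-bouquet identity $\varepsilon(B')=\operatorname{rank}\mathbf{A}(I(B'),L_{I(B')})$, i.e.\ $f(B')-1=\operatorname{corank}\mathbf{A}$, which is the classical fact you cite.

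Your induction for that identity also works. The step you flag as the main obstacle is easier if you realise both moves as ribbon-graph contractions, because contraction never changes the number of faces: $f(G/e)=v((G/e)^*)=v(G^*\setminus e)=v(G^*)=f(G)$.

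\begin{itemize}
\item If $a$ is twisted, the vertex disc together with $a$ is a M\"obius band, which has one boundary component. So $B^{\{a\}}$ is already a bouquet, and $B/a=B^{\{a\}}\setminus a$ has $e-1$ loops. No further ``contracting back'' is needed.
\item If $a,b$ are untwisted and interlaced, $B/a$ has two vertices joined by the non-loop edge $b$. Hence $B/\{a,b\}$ is a bouquet with $e-2$ loops.
\end{itemize}

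In both cases $f$ is unchanged, and by the Schur complement formula so is $e-\operatorname{rank}$. So what you need is that $f$ stays fixed, not that it changes ``in step with the rank''. No boundary walks have to be traced; the only genuine verification is combinatorial.

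\begin{itemize}
\item For a twisted $a$, let $P,Q$ be the two arcs cut out by its ends. The cyclic word of $B/a$ is $P$ followed by $Q$ reversed. This toggles the twist of exactly the loops in $N(a)$ and the interlacement of exactly the pairs inside $N(a)$. That is precisely $(I(B),L_{I(B)})\underline{*}a$.
\item For an untwisted interlaced pair with word $aP_1bP_2aQ_1bQ_2$, the word of $B/\{a,b\}$ is $P_2P_1Q_2Q_1$, with no twists changed. This realises the pivot on $ab$ followed by deleting $a,b$. It matches the fact that $pq^{\mathsf T}+qp^{\mathsf T}$ has zero diagonal, where $p,q$ are the columns of $a,b$ restricted to the other loops.
\end{itemize}
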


Proposition~\ref{thm:bouquet-intersection-graft-polynomial} shows that the
partial Petrial polynomial of a bouquet can be expressed entirely in terms of
its intersection graft. Consequently, \(P^\times_{(G,L_G)}(z)\) can be viewed as a matrix representation of the partial Petrial polynomial of a bouquet.

The partial Petrial polynomial of a graft is independent of the initial subset \(L_G\) \cite{DengJinYan2026}. Hence \[P^\times_{(G,L_G)}(z)=P^\times_{(G,\emptyset)}(z).\]
For a simple graph \(G\), we define its partial Petrial polynomial as
\[
P_G^\times(z) := P^\times_{(G,\emptyset)}(z) = \sum_{F\subseteq V(G)} z^{\operatorname{rank}\mathbf{A}(G,F)}.
\]

\subsection{Local complementation and edge pivoting}

We next recall local complementation. For a vertex $v\in V(G)$, let $N_G(v)$ denote the set of neighbours of $v$ in $G$. Note that $v\notin N_G(v)$.

\begin{definition}[\cite{Kotzig1968}]\label{def:local-complementation}
Let $G$ be a simple graph and $v\in V(G)$. The \emph{local complementation} at $v$, denoted by $G*v$, is the graph obtained from $G$ by replacing the induced subgraph on the neighbourhood $N_G(v)$ with its complement. Equivalently, $G*v$ is formed by toggling all adjacencies between vertices in $N_G(v)$, that is, replacing edges with non-edges and vice versa within $N_G(v)$. We further define
\[
G\underline{*}v:=(G*v)\setminus\{v\}.
\]
\end{definition}

We also define local complementation for grafts as follows. 

\begin{definition}[\cite{Moffatt2019}]\label{def:graft-local-complementation}
Let $G$ be a simple graph and $L_G\subseteq V(G)$. For any vertex $v\in L_G$, the \emph{local complementation} at $v$ is the operation on the graft $(G,L_G)$ defined by
\[
(G,L_G)\mapsto (G*v,\ L_G\Delta N_G(v)).
\]
The \emph{local complementation deletion} at $v$ is the operation
\[
(G,L_G)\underline{*}v
:=
\bigl(G\underline{*}v,\ (L_G\setminus\{v\})\Delta N_G(v)\bigr).
\]
A graft \((H,L_H)\) is a \emph{local complementation minor} of \((G,L_G)\) if it
can be obtained from \((G,L_G)\) by a sequence of local complementation
deletion operations.
\end{definition}

The edge pivot operation was introduced by Bouchet in the study of isotropic systems \cite{Bouchet1988} and multimatroids \cite{Bouchet2001}. Independently, it was rediscovered by Arratia, Bollob\'as and Sorkin in their work on the interlace polynomial \cite{ArratiaBollobasSorkin2000,ArratiaBollobasSorkin2004}.

\begin{definition}\label{def:edge-pivot}
Let \(G\) be a simple graph and \(uv\in E(G)\). Partition the vertices other
than \(u\) and \(v\) into four classes:
\begin{itemize}
\item[(1)] vertices adjacent to \(u\) but not \(v\);
\item[(2)] vertices adjacent to \(v\) but not \(u\);
\item[(3)] vertices adjacent to both \(u\) and \(v\);
\item[(4)] vertices adjacent to neither \(u\) nor \(v\).
\end{itemize}
The \emph{pivot} of the edge \(uv\) is the graph \(G\wedge uv\), constructed from \(G\)
as follows. For any vertex pair \(x,y\) where \(x\) is in one of the classes
(1)--(3), and \(y\) is in a different class (1)--(3), toggle the pair \(xy\)
in the edge set, so if \(xy\) was an edge, make it a non-edge; and if \(xy\)
was a non-edge, make it an edge. Finally, switch the names of the vertices
\(u\) and \(v\). See Figure~\ref{fig:edge-pivot} \cite{Moffatt2023}.
\end{definition}

\begin{figure}[H]
\centering
\begin{tikzpicture}[
    x=0.01cm,
    y=-0.01cm,
    scale=1.25,
    transform shape,
    thinlines/.style={line width=0.55pt},
    ellipsebox/.style={draw, line width=0.65pt, fill=white},
    settext/.style={font=\scriptsize},
    toplabel/.style={font=\scriptsize},
    captext/.style={font=\footnotesize},
    toggletext/.style={font=\scriptsize},
    leftstep/.style={
        line width=2.0pt,
        line cap=round,
        line join=round
    },
    rightstep/.style={
        line width=0.95pt,
        line cap=round,
        line join=round
    },
    leftzig/.style={
        line width=1.7pt,
        decorate,
        decoration={zigzag, segment length=5.5pt, amplitude=1.5pt},
        line cap=round
    },
    rightzig/.style={
        line width=0.95pt,
        decorate,
        decoration={zigzag, segment length=5.5pt, amplitude=1.35pt},
        line cap=round
    }
]


\coordinate (uA) at (255,87);
\coordinate (vA) at (407,87);

\draw[thinlines] (uA) -- (vA);

\foreach \x/\y in {
    176/277,
    207/249,
    238/277
}{
    \draw[thinlines] (uA) -- (\x,\y);
}

\foreach \x/\y in {
    290/169,
    309/164,
    322/160
}{
    \draw[thinlines] (uA) -- (\x,\y);
}

\foreach \x/\y in {
    340/160,
    353/164,
    372/169
}{
    \draw[thinlines] (vA) -- (\x,\y);
}

\foreach \x/\y in {
    422/277,
    453/249,
    484/277
}{
    \draw[thinlines] (vA) -- (\x,\y);
}

\draw[ellipsebox] (207,285) ellipse [x radius=56, y radius=35];
\draw[ellipsebox] (331,194) ellipse [x radius=57, y radius=34];
\draw[ellipsebox] (453,285) ellipse [x radius=56, y radius=35];
\draw[ellipsebox] (331,360) ellipse [x radius=56, y radius=35];

\draw[leftstep]
    (255,260) -- (255,249) -- (268,249) -- (268,237)
    -- (281,237) -- (281,225) -- (294,225);

\draw[leftstep]
    (407,260) -- (407,249) -- (394,249) -- (394,237)
    -- (381,237) -- (381,225) -- (368,225);

\draw[leftzig] (274,286) -- (388,286);

\node[settext] at (207,286) {$S_u$};
\node[settext] at (331,195) {$S_{uv}$};
\node[settext] at (453,286) {$S_v$};

\node[toggletext] at (331,255) {toggle};

\fill (uA) circle [radius=10];
\fill (vA) circle [radius=10];

\node[toplabel] at (255,65) {$u$};
\node[toplabel] at (407,65) {$v$};

\node[captext] at (331,427) {{\bfseries (a)} $G$};


\coordinate (vB) at (752,87);
\coordinate (uB) at (904,87);

\draw[thinlines] (vB) -- (uB);

\foreach \x/\y in {
    673/277,
    704/249,
    735/277
}{
    \draw[thinlines] (vB) -- (\x,\y);
}

\foreach \x/\y in {
    787/169,
    806/164,
    819/160
}{
    \draw[thinlines] (vB) -- (\x,\y);
}

\foreach \x/\y in {
    837/160,
    850/164,
    869/169
}{
    \draw[thinlines] (uB) -- (\x,\y);
}

\foreach \x/\y in {
    920/277,
    951/249,
    982/277
}{
    \draw[thinlines] (uB) -- (\x,\y);
}

\draw[ellipsebox] (704,285) ellipse [x radius=56, y radius=35];
\draw[ellipsebox] (828,194) ellipse [x radius=57, y radius=34];
\draw[ellipsebox] (951,285) ellipse [x radius=56, y radius=35];
\draw[ellipsebox] (828,360) ellipse [x radius=56, y radius=35];

\draw[rightstep]
    (760,263) -- ++(0,-9) -- ++(11,0)
    -- ++(0,-9) -- ++(11,0)
    -- ++(0,-9) -- ++(11,0);

\draw[rightstep]
    (768,271) -- ++(0,-9) -- ++(11,0)
    -- ++(0,-9) -- ++(11,0)
    -- ++(0,-9) -- ++(11,0);

\draw[rightstep]
    (896,263) -- ++(0,-9) -- ++(-11,0)
    -- ++(0,-9) -- ++(-11,0)
    -- ++(0,-9) -- ++(-11,0);

\draw[rightstep]
    (888,271) -- ++(0,-9) -- ++(-11,0)
    -- ++(0,-9) -- ++(-11,0)
    -- ++(0,-9) -- ++(-11,0);

\draw[rightzig] (772,288) -- (884,288);
\draw[rightzig] (772,300) -- (884,300);

\node[settext] at (704,286) {$S_u$};
\node[settext] at (828,195) {$S_{uv}$};
\node[settext] at (951,286) {$S_v$};

\fill (vB) circle [radius=10];
\fill (uB) circle [radius=10];

\node[toplabel] at (752,65) {$v$};
\node[toplabel] at (904,65) {$u$};

\node[captext] at (828,427) {{\bfseries (b)} $G\wedge uv$};

\end{tikzpicture}
\caption{The edge pivot operation: edges between the three sets \(S_u\), \(S_v\), and \(S_{uv}\) are toggled, and the names of \(u\) and \(v\) are interchanged.}
\label{fig:edge-pivot}
\end{figure}

The matrix form of the edge pivot is the principal pivot transform, defined as follows. Let \(M\) be a square matrix whose rows and columns are indexed by a finite
set \(V\), and let \(X\subseteq V\). Suppose that the principal submatrix
\(M[X]\) is nonsingular. With respect to the decomposition
\(V=X\cup (V\setminus X)\), write
\[
M=
\begin{pmatrix}
	A & B\\
	C & D
\end{pmatrix},
\qquad A=M[X].
\]
The \emph{principal pivot transform} of \(M\) with respect to \(X\), denoted by
\(M*X\), is defined by
\[
M*X=
\begin{pmatrix}
	A^{-1} & A^{-1}B\\
	-CA^{-1} & D-CA^{-1}B
\end{pmatrix}.
\]

\section{A recurrence for the partial Petrial polynomial}

To obtain the recurrence, we first present several necessary lemmas and introduce two auxiliary graphs.

\begin{proposition}[\cite{FengYanZheng2026}]\label{prop:corank-invariance}
Let \((G,L_G)\) and \((H,L_H)\) be grafts. If \((H,L_H)\) is a local
complementation minor of \((G,L_G)\), then
\[
\operatorname{corank} \mathbf{A}(G,L_G)
=
\operatorname{corank} \mathbf{A}(H,L_H).
\]
\end{proposition}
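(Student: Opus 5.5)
By induction on the number of local complementation deletions, the statement reduces to a single step. So I would fix a graft $(G,L_G)$ and a vertex $v\in L_G$, and set $(H,L_H)=(G,L_G)\underline{*}v$. The goal is then to show that $\operatorname{corank}\mathbf{A}(G,L_G)=\operatorname{corank}\mathbf{A}(H,L_H)$. Since $H$ has one vertex fewer, this is the same as showing $\operatorname{rank}\mathbf{A}(G,L_G)=\operatorname{rank}\mathbf{A}(H,L_H)+1$.

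The key computation is a Schur complement over $\GF(2)$. Order the vertices so that $v$ comes first and write
\[
\mathbf{A}(G,L_G)=\begin{pmatrix} 1 & b^{T}\\ b & D\end{pmatrix},
\]
where the $(v,v)$-entry is $1$ because $v\in L_G$, $b$ is the indicator vector of $N_G(v)$, and $D=\mathbf{A}(G-v,L_G\setminus\{v\})$. Conjugating by the invertible elementary matrices that clear the first row and column gives
\[
\begin{pmatrix} 1 & 0\\ b & I\end{pmatrix}^{-1}\mathbf{A}(G,L_G)\begin{pmatrix} 1 & b^{T}\\ 0 & I\end{pmatrix}^{-1}=\begin{pmatrix} 1 & 0\\ 0 & D+bb^{T}\end{pmatrix}.
\]
Hence $\operatorname{rank}\mathbf{A}(G,L_G)=1+\operatorname{rank}(D+bb^{T})$. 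Equivalently, this is the $(V\setminus\{v\})$-block of the principal pivot transform $\mathbf{A}(G,L_G)*\{v\}$, which is defined since $\mathbf{A}(G,L_G)[\{v\}]=(1)$ is nonsingular.

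It remains to identify $D+bb^{T}$ with $\mathbf{A}(H,L_H)$. The matrix $bb^{T}$ has a $1$ in position $(x,y)$ exactly when $x,y\in N_G(v)$. Off the diagonal, adding it toggles every adjacency between distinct neighbours of $v$, which is precisely the passage from $G-v$ to $(G*v)\setminus\{v\}=G\underline{*}v$. On the diagonal, it toggles the entry $(x,x)$ exactly for $x\in N_G(v)$, so the loop set becomes $(L_G\setminus\{v\})\Delta N_G(v)=L_H$. This matches Definition~\ref{def:graft-local-complementation}, so $D+bb^{T}=\mathbf{A}(H,L_H)$ and the single-step claim follows. Iterating it along the sequence of deletions proves the proposition.

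I do not expect a real obstacle. The only point needing care is checking that the diagonal update from $bb^{T}$ matches the loop-set update $L_G\Delta N_G(v)$ in the definition. This is exactly why the definition requires $v\in L_G$: the pivot entry must be $1$ for the Schur complement step to apply.
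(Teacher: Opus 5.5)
Your proof is correct. The congruence by $\begin{pmatrix}1&0\\ b&I\end{pmatrix}$ gives $\operatorname{rank}\mathbf{A}(G,L_G)=1+\operatorname{rank}(D+bb^{\mathsf T})$. Your identification of $D+bb^{\mathsf T}$ with $\mathbf{A}\bigl((G,L_G)\underline{*}v\bigr)$ is exactly right, including the diagonal update and the role of $v\in L_G$. Strictly speaking the operation is a congruence rather than a conjugation, but that does not affect rank.

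The paper does not prove this proposition itself; it imports it from Feng, Yan and Zheng. Your one-step claim is precisely Lemma~\ref{lem:local-complement-rank}, which the paper derives from the proposition, so you are simply running that implication in the other direction. Your rank-one Schur complement identity is also the same computation the paper carries out in the $\epsilon=1$ case of the proof of Theorem~\ref{thm:recurrence-edge}, namely $\mathbf{A}(G[W],R)+pp^{\mathsf T}=\mathbf{A}((G*u)[W],R\Delta N)$. Your argument is therefore fully in line with the paper's own methods.
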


\begin{lemma}
	\label{lem:local-complement-rank}
	Let \(G\) be a simple graph and \(v\in V(G)\). If \(F\subseteq V(G)\) contains \(v\), then
\[
\operatorname{rank} \mathbf{A}(G,F) = \operatorname{rank} \mathbf{A}\bigl((G,F)\underline{*}v\bigr)+1.
\]
\end{lemma}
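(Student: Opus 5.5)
We will prove this directly from the matrix side via a Schur complement (one pivot step over $\GF(2)$), and then reconcile it with Proposition~\ref{prop:corank-invariance}. Put $M=\mathbf{A}(G,F)$ and order the vertices so that $v$ comes first. Since $v\in F$, the diagonal entry $M_{vv}$ equals $1$, so
\[
M=\begin{pmatrix} 1 & b^{T}\\ b & D\end{pmatrix},
\]
where $b$ is the characteristic vector of $N_G(v)$ on $V(G)\setminus\{v\}$ and $D=\mathbf{A}\bigl(G\setminus v,\,F\setminus\{v\}\bigr)$. Because the pivot entry is nonsingular, row and column operations over $\GF(2)$ reduce $M$ to $\operatorname{diag}\bigl(1,\,D+bb^{T}\bigr)$. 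These are the operations that clear the first row and column, i.e.\ the principal pivot transform with respect to $X=\{v\}$. Hence
\[
\operatorname{rank} M=1+\operatorname{rank}(D+bb^{T}).
\]

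It remains to identify $D+bb^{T}$ with $\mathbf{A}\bigl((G,F)\underline{*}v\bigr)$. The matrix $bb^{T}$ has a $1$ in entry $(x,y)$ exactly when $x,y\in N_G(v)$, and only then.

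\emph{Off-diagonal entries.} For $x\neq y$, adding $bb^{T}$ toggles adjacency between every pair of neighbours of $v$. This is precisely the definition of $G*v$. Deleting $v$ then gives the graph $G\underline{*}v$.

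\emph{Diagonal entries.} The entry at $x$ is toggled exactly when $x\in N_G(v)$. So the new loop set is $(F\setminus\{v\})\,\Delta\,N_G(v)$, which matches Definition~\ref{def:graft-local-complementation} with $L_G=F$; the hypothesis $v\in F$ is exactly what makes that operation defined. Thus $D+bb^{T}=\mathbf{A}\bigl((G,F)\underline{*}v\bigr)$, and the lemma follows.

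As a consistency check, the result can also be read off from Proposition~\ref{prop:corank-invariance}: corank is preserved under the operation, while the matrix size drops by one, so the rank drops by exactly one. This is the same statement. We prefer the self-contained Schur-complement computation, however, because it avoids depending on how corank is normalised in that proposition. The only step needing care is the bookkeeping of the diagonal under $bb^{T}$, i.e.\ confirming that a neighbour's loop status flips while non-neighbours are untouched; the remaining steps are routine.
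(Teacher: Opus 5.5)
Your proof is correct, but it takes a different route from the paper. The paper's proof is two lines. Since $v\in F$, it invokes Proposition~\ref{prop:corank-invariance} to get $\operatorname{corank}\mathbf{A}(G,F)=\operatorname{corank}\mathbf{A}\bigl((G,F)\underline{*}v\bigr)$. It then subtracts the matrix sizes $|V(G)|$ and $|V(G)|-1$, which is exactly what you relegate to a ``consistency check''.

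You instead eliminate on the nonzero pivot $M_{vv}=1$ and identify the Schur complement $D+bb^{\mathsf T}$ entrywise with $\mathbf{A}\bigl((G,F)\underline{*}v\bigr)$. Both identifications are right: off the diagonal, adjacency is toggled within $N_G(v)$ and then $v$ is deleted; on the diagonal, the loop set becomes $(F\setminus\{v\})\Delta N_G(v)$.

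The paper's route is shorter but rests on an external result. Yours is self-contained and shows exactly where $v\in F$ is used, namely to make the pivot nonzero. It also proves the single-step case of Proposition~\ref{prop:corank-invariance}, from which the general case follows by iteration. It is the same computation the paper carries out later in the $\Sigma_0$ part of the proof of Theorem~\ref{thm:recurrence-edge}, where $\mathbf{A}(G[W],R)+pp^{\mathsf T}$ is identified with $\mathbf{A}((G*u)[W],R\Delta N)$.

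One small terminological slip: reducing $M$ to $\operatorname{diag}(1,D+bb^{\mathsf T})$ by row and column operations is Gaussian elimination, not the principal pivot transform $M*\{v\}$. The transform only has $D+bb^{\mathsf T}$ as its lower-right block, and its rank is $1+\operatorname{rank}D$, which can differ from $\operatorname{rank}M$; for a single edge with $F=\{v\}$ the ranks are $1$ and $2$. Your rank formula relies only on the elimination, so nothing breaks.
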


\begin{proof}
Since \(v\in F\), Proposition~\ref{prop:corank-invariance} gives
    \[
    \operatorname{corank} \mathbf{A}(G,F) = \operatorname{corank} \mathbf{A}\bigl((G,F)\underline{*}v\bigr).
    \]
    Hence
    \[
    |V(G)| - \operatorname{rank} \mathbf{A}(G,F) = (|V(G)|-1) - \operatorname{rank} \mathbf{A}\bigl((G,F)\underline{*}v\bigr),
    \]
    which simplifies to
    \[
    \operatorname{rank} \mathbf{A}(G,F) = \operatorname{rank} \mathbf{A}\bigl((G,F)\underline{*}v\bigr) + 1.
    \]

\end{proof}

Given a graph \(G\) and an edge \(uv\in E(G)\), we define two auxiliary graphs:
\[
G_{u,v}^{0} := (G\wedge uv)\setminus\{u,v\},\qquad
G_{u,v}^{1} := ((G*u)\wedge uv)\setminus\{u,v\}.
\]

\begin{example}
Figure~\ref{fig:example-G0-G1} illustrates the constructions of \(G_{u,v}^{0}\) and \(G_{u,v}^{1}\).
\end{example}

\begin{figure}[H]
\centering
\resizebox{0.90\textwidth}{!}{%
\begin{tikzpicture}[
    scale=0.76,
    every node/.style={font=\small},
    vtx/.style={circle, fill=black, inner sep=1.7pt},
    baseedge/.style={line width=0.45pt, black!45},
    edge/.style={line width=0.65pt},
    newedge/.style={line width=1.0pt},
    paneltitle/.style={font=\small}
]


\begin{scope}[xshift=0cm,yshift=0cm]

\node[vtx,label=above:\(u\)] (uA) at (0,2.65) {};
\node[vtx,label=above:\(v\)] (vA) at (3.20,2.65) {};

\node[vtx,label=left:\(a_1\)] (a1A) at (-1.10,1.50) {};
\node[vtx] (a2A) at (-1.10,0.30) {};
\node[anchor=east] at (-1.28,0.03) {\(a_2\)};

\node[vtx] (c2A) at (1.45,1.50) {};
\node[anchor=south west] at (1.55,1.25) {\(c_2\)};
\node[vtx,label=below:\(c_1\)] (c1A) at (1.45,0.30) {};

\node[vtx,label=right:\(b\)] (bA) at (4.20,0.90) {};

\draw[baseedge] (uA)--(vA);
\draw[baseedge] (uA)--(a1A);
\draw[baseedge] (uA)--(a2A);
\draw[baseedge] (uA)--(c1A);
\draw[baseedge] (uA)--(c2A);

\draw[baseedge] (vA)--(bA);
\draw[baseedge] (vA)--(c1A);
\draw[baseedge] (vA)--(c2A);

\draw[edge] (a1A)--(a2A);
\draw[edge] (a1A)--(c2A);
\draw[edge] (a2A)--(c1A);
\draw[edge] (a2A) to[out=-45,in=215,looseness=1.25] (bA);
\draw[edge] (c2A) to[out=-8,in=170] (bA);

\node[paneltitle] at (1.45,-1.65) {{\bfseries (a)} \(G\)};

\end{scope}

\begin{scope}[xshift=7.20cm,yshift=0cm]

\node[vtx,label=above:\(v\)] (uB) at (0,2.65) {};
\node[vtx,label=above:\(u\)] (vB) at (3.20,2.65) {};

\node[vtx,label=left:\(a_1\)] (a1B) at (-1.10,1.50) {};
\node[vtx] (a2B) at (-1.10,0.30) {};
\node[anchor=east] at (-1.28,0.03) {\(a_2\)};

\node[vtx] (c2B) at (1.45,1.50) {};
\node[anchor=south west] at (1.55,1.20) {\(c_2\)};
\node[vtx,label=below:\(c_1\)] (c1B) at (1.45,0.30) {};

\node[vtx,label=right:\(b\)] (bB) at (4.20,0.90) {};

\draw[baseedge] (uB)--(vB);
\draw[baseedge] (uB)--(a1B);
\draw[baseedge] (uB)--(a2B);
\draw[baseedge] (uB)--(c1B);
\draw[baseedge] (uB)--(c2B);

\draw[baseedge] (vB)--(bB);
\draw[baseedge] (vB)--(c1B);
\draw[baseedge] (vB)--(c2B);

\draw[edge] (a1B)--(a2B);
\draw[newedge] (a1B)--(c1B);
\draw[newedge] (a2B)--(c2B);
\draw[newedge] (c1B)--(bB);
\draw[newedge]
    (a1B) .. controls (-3.10,-1.10) and (4.25,-1.05) .. (bB);

\node[paneltitle] at (1.45,-1.65) {{\bfseries (b)} \(G\wedge uv\)};

\end{scope}

\begin{scope}[xshift=14.40cm,yshift=0cm]

\node[vtx,label=left:\(a_1\)] (a1C) at (-1.10,1.50) {};
\node[vtx] (a2C) at (-1.10,0.30) {};
\node[anchor=east] at (-1.28,0.03) {\(a_2\)};

\node[vtx] (c2C) at (1.45,1.50) {};
\node[anchor=south west] at (1.55,1.35) {\(c_2\)};
\node[vtx,label=below:\(c_1\)] (c1C) at (1.45,0.30) {};

\node[vtx,label=right:\(b\)] (bC) at (4.20,0.90) {};

\draw[edge] (a1C)--(a2C);
\draw[newedge] (a1C)--(c1C);
\draw[newedge] (a2C)--(c2C);
\draw[newedge] (c1C)--(bC);
\draw[newedge]
    (a1C) .. controls (-3.10,-1.10) and (4.25,-1.05) .. (bC);

\node[paneltitle] at (1.45,-1.65) {{\bfseries (c)} \(G_{u,v}^{0}\)};

\end{scope}


\begin{scope}[xshift=0cm,yshift=-6.10cm]

\node[vtx,label=above:\(u\)] (uD) at (0,2.65) {};
\node[vtx,label=above:\(v\)] (vD) at (3.00,2.65) {};

\node[vtx,label=left:\(a_1\)] (a1D) at (-1.10,1.50) {};
\node[vtx] (a2D) at (-1.10,0.30) {};
\node[anchor=east] at (-1.28,0.03) {\(a_2\)};

\node[vtx] (c2D) at (1.45,1.50) {};
\node[anchor=south west] at (1.55,1.35) {\(c_2\)};
\node[vtx,label=below:\(c_1\)] (c1D) at (1.45,0.30) {};

\node[vtx,label=right:\(b\)] (bD) at (4.20,0.90) {};

\draw[baseedge] (uD)--(vD);
\draw[baseedge] (uD)--(a1D);
\draw[baseedge] (uD)--(a2D);
\draw[baseedge] (uD)--(c1D);
\draw[baseedge] (uD)--(c2D);

\draw[baseedge] (vD)--(bD);
\draw[newedge] (vD)--(a1D);
\draw[newedge] (vD)--(a2D);

\draw[newedge] (a1D)--(c1D);
\draw[newedge] (a2D)--(c2D);
\draw[newedge] (c1D)--(c2D);
\draw[edge] (a2D) to[out=-45,in=215,looseness=1.25] (bD);
\draw[edge] (c2D) to[out=-8,in=170] (bD);

\node[paneltitle] at (1.45,-1.65) {{\bfseries (d)} \(G*u\)};

\end{scope}

\begin{scope}[xshift=7.20cm,yshift=-6.10cm]

\node[vtx,label=above:\(v\)] (uE) at (0,2.65) {};
\node[vtx,label=above:\(u\)] (vE) at (3.00,2.65) {};

\node[vtx,label=left:\(a_1\)] (a1E) at (-1.10,1.50) {};
\node[vtx] (a2E) at (-1.10,0.30) {};
\node[anchor=east] at (-1.28,0.03) {\(a_2\)};

\node[vtx] (c2E) at (1.45,1.50) {};
\node[anchor=south west] at (1.55,1.35) {\(c_2\)};
\node[vtx,label=below:\(c_1\)] (c1E) at (1.45,0.30) {};

\node[vtx,label=right:\(b\)] (bE) at (4.20,0.90) {};

\draw[baseedge] (uE)--(vE);
\draw[baseedge] (uE)--(a1E);
\draw[baseedge] (uE)--(a2E);
\draw[baseedge] (uE)--(c1E);
\draw[baseedge] (uE)--(c2E);

\draw[baseedge] (vE)--(a1E);
\draw[baseedge] (vE)--(a2E);
\draw[baseedge] (vE)--(bE);

\draw[newedge] (c1E)--(c2E);
\draw[newedge] (a1E)--(c2E);
\draw[newedge] (a2E)--(c1E);
\draw[newedge] (c1E)--(bE);
\draw[newedge]
    (a1E) .. controls (-3.10,-1.10) and (4.25,-1.05) .. (bE);

\node[paneltitle] at (1.45,-1.65) {{\bfseries (e)} \((G*u)\wedge uv\)};

\end{scope}

\begin{scope}[xshift=14.40cm,yshift=-6.10cm]

\node[vtx,label=left:\(a_1\)] (a1F) at (-1.10,1.50) {};
\node[vtx] (a2F) at (-1.10,0.30) {};
\node[anchor=east] at (-1.28,0.03) {\(a_2\)};

\node[vtx] (c2F) at (1.45,1.50) {};
\node[anchor=south west] at (1.55,1.35) {\(c_2\)};
\node[vtx,label=below:\(c_1\)] (c1F) at (1.45,0.30) {};

\node[vtx,label=right:\(b\)] (bF) at (4.20,0.90) {};

\draw[newedge] (c1F)--(c2F);
\draw[newedge] (a1F)--(c2F);
\draw[newedge] (a2F)--(c1F);
\draw[newedge] (c1F)--(bF);
\draw[newedge]
    (a1F) .. controls (-3.10,-1.10) and (4.25,-1.05) .. (bF);

\node[paneltitle] at (1.45,-1.65) {{\bfseries (f)} \(G_{u,v}^{1}\)};

\end{scope}

\end{tikzpicture}%
}
\caption{Illustration of the auxiliary graphs \(G_{u,v}^{0}\) and \(G_{u,v}^{1}\).}
\label{fig:example-G0-G1}
\end{figure}

\begin{lemma}[\cite{Moffatt2023}]
	\label{lem:edge-pivot-ppt}
	Let \(G\) be a simple graph and \(uv\in E(G)\). Then
	\[
	\mathbf{A}_G*\{u,v\}=\mathbf{A}_{G\wedge uv}.
	\]
\end{lemma}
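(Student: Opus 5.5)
The plan is a direct block computation over $\GF(2)$, using the decomposition $V=X\cup(V\setminus X)$ with $X=\{u,v\}$. Write $M=\mathbf{A}_G$. Since $uv\in E(G)$ and $G$ is simple,
\[
A=M[\{u,v\}]=\begin{pmatrix}0&1\\1&0\end{pmatrix},
\]
which is nonsingular and satisfies $A^{-1}=A$, so the principal pivot transform is defined. Let $B$ be the $2\times(|V|-2)$ block. Its row $u$ is the indicator vector $\beta_u$ of $N_G(u)\setminus\{v\}$, and its row $v$ is the indicator vector $\beta_v$ of $N_G(v)\setminus\{u\}$. By symmetry $C=B^{T}$. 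Over $\GF(2)$ all minus signs disappear, so
\[
M*\{u,v\}=\begin{pmatrix}A & AB\\ B^{T}A & D+B^{T}AB\end{pmatrix}.
\]
The off-diagonal blocks are transposes of each other, so the result is again symmetric.

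I then check the three kinds of blocks in turn.

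\textbf{The $\{u,v\}$ block.} Here $A^{-1}=A$ is unchanged: $u$ and $v$ remain adjacent and both diagonal entries remain $0$.

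\textbf{The $(\{u,v\},V\setminus\{u,v\})$ blocks.} Multiplying by $A$ swaps the two rows. The row indexed $u$ becomes $\beta_v$ and the row indexed $v$ becomes $\beta_u$. This is exactly the neighbourhood data of $G$ after the names of $u$ and $v$ are switched, which is the final step of the definition of $G\wedge uv$. The adjacencies of $u$ and $v$ to all other vertices are otherwise unchanged, as the definition requires.

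\textbf{The $V\setminus\{u,v\}$ block.} For $x,y\notin\{u,v\}$,
\[
(D+B^{T}AB)_{xy}=D_{xy}+\beta_u(x)\beta_v(y)+\beta_v(x)\beta_u(y).
\]
On the diagonal the correction is $2\beta_u(x)\beta_v(x)=0$, so the matrix stays loopless. Off the diagonal, I evaluate the correction term according to the classes (1)--(4) of $x$ and $y$, where $(\beta_u,\beta_v)$ equals $(1,0)$, $(0,1)$, $(1,1)$ and $(0,0)$ respectively.
\begin{itemize}
\item If one of $x,y$ lies in class (4), the term is $0$.
\item If $x,y$ lie in the same class among (1)--(3), the term is $0$; for class (3) this is because it equals $1+1$.
\item If $x,y$ lie in two different classes among (1)--(3), the term is exactly $1$.
\end{itemize}
So the entry $xy$ is toggled precisely when $x$ and $y$ lie in distinct classes among (1)--(3). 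This is the toggling rule in the definition of $G\wedge uv$.

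Combining the three blocks gives $M*\{u,v\}=\mathbf{A}_{G\wedge uv}$. I do not expect a genuine obstacle. The only step needing care is the bookkeeping of the $u\leftrightarrow v$ relabelling in the off-diagonal blocks, together with confirming that no diagonal entries (loops) are created.
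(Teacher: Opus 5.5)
Your proposal is correct. With $A=A^{-1}=\begin{pmatrix}0&1\\1&0\end{pmatrix}$ and $C=B^{\mathsf T}$, the row swap in $AB$ is exactly the renaming of $u$ and $v$. Your class-by-class evaluation of $\beta_u(x)\beta_v(y)+\beta_v(x)\beta_u(y)$ reproduces the toggling rule of Definition~\ref{def:edge-pivot} and keeps the diagonal zero.

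The paper gives no proof of this lemma. It quotes it from Moffatt and uses it in the proof of Lemma~\ref{lem:pivot-deletion-matrix}, to identify the $W$-block of $\mathbf A_{G\wedge uv}$ with the Schur complement $\mathbf A_{G[W]}+pq^{\mathsf T}+qp^{\mathsf T}$. Your third block computation is that same Schur-complement calculation, with $p=\beta_u$ and $q=\beta_v$, plus the check that it agrees with the combinatorial definition of the pivot. Your route therefore makes the argument self-contained, and it also yields Lemma~\ref{lem:pivot-deletion-matrix} directly.

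The citation buys the paper brevity. Your direct verification confirms that the conventions match exactly. In particular, it shows that the name switch in the definition of $G\wedge uv$ is what makes the off-diagonal blocks $AB$ and $B^{\mathsf T}A$ agree. That is the one place where differing conventions in the literature could otherwise cause a mismatch.
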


\begin{lemma}
\label{lem:pivot-deletion-matrix}
Let \(G\) be a simple graph and \(uv\in E(G)\). Set \(W=V(G)\setminus\{u,v\}\).
Let \(p,q\in \GF(2)^W\) be column vectors satisfying: for all \(x\in W\), \(p_x=1\) if and only if \(x\) is adjacent to \(u\) in \(G\), and \(q_x=1\) if and only if \(x\) is adjacent to \(v\) in \(G\).
Then, for every \(Q\subseteq W\),

\[
\mathbf{A}\bigl(G_{u,v}^{0},Q\bigr)
=
\mathbf{A}(G[W],Q)+pq^{\mathsf T}+qp^{\mathsf T}.
\]
\end{lemma}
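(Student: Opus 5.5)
The plan is to compute the principal pivot transform of $\mathbf{A}_G$ on $X=\{u,v\}$ explicitly, use Lemma~\ref{lem:edge-pivot-ppt} to identify the result with $\mathbf{A}_{G\wedge uv}$, and then read off the $W\times W$ block. With respect to $V(G)=X\cup W$, write
\[
\mathbf{A}_G=\begin{pmatrix} A & B\\ B^{\mathsf T} & D\end{pmatrix},\qquad
A=\begin{pmatrix}0&1\\1&0\end{pmatrix},\quad
B=\begin{pmatrix}p^{\mathsf T}\\ q^{\mathsf T}\end{pmatrix},\quad D=\mathbf{A}_{G[W]} .
\]
The columns of $B^{\mathsf T}$ are therefore $p$ and $q$. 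Since $uv\in E(G)$, the block $A$ is nonsingular with $A^{-1}=A$, so $\mathbf{A}_G*\{u,v\}$ is defined.

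The lower right block of $\mathbf{A}_G*\{u,v\}$ is $D-B^{\mathsf T}A^{-1}B$. Over $\GF(2)$ the minus sign is irrelevant, and
\[
B^{\mathsf T}AB=\begin{pmatrix}p&q\end{pmatrix}\begin{pmatrix}0&1\\1&0\end{pmatrix}\begin{pmatrix}p^{\mathsf T}\\ q^{\mathsf T}\end{pmatrix}=pq^{\mathsf T}+qp^{\mathsf T}.
\]
Hence the $W\times W$ block of $\mathbf{A}_{G\wedge uv}$ equals $\mathbf{A}_{G[W]}+pq^{\mathsf T}+qp^{\mathsf T}$. Deleting $u,v$ from $G\wedge uv$ keeps exactly this principal submatrix. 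The relabelling of $u$ and $v$ in the pivot only affects rows and columns indexed by $X$, so it does not matter here. This gives $\mathbf{A}_{G^0_{u,v}}=\mathbf{A}_{G[W]}+pq^{\mathsf T}+qp^{\mathsf T}$.

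It remains to check the diagonal and to add $Q$. The diagonal entries of $pq^{\mathsf T}+qp^{\mathsf T}$ are $2p_xq_x=0$ over $\GF(2)$. So both sides have zero diagonal, consistent with $G^0_{u,v}$ being simple, and the identity holds entrywise off the diagonal. Adding the diagonal indicator matrix of $Q$ to both sides turns $\mathbf{A}_{G^0_{u,v}}$ into $\mathbf{A}(G^0_{u,v},Q)$ and $\mathbf{A}_{G[W]}$ into $\mathbf{A}(G[W],Q)$, which yields the statement.

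The main point needing care is the identification of the $W\times W$ block of the pivot transform with the adjacency matrix of $(G\wedge uv)\setminus\{u,v\}$. That is, one must be sure that Lemma~\ref{lem:edge-pivot-ppt} uses the same vertex indexing on $W$, and that the swap of names of $u,v$ does not affect $W$. Alternatively, the identity can be verified directly from Definition~\ref{def:edge-pivot}. For $x\neq y$ in $W$, the $(x,y)$ entry of $pq^{\mathsf T}+qp^{\mathsf T}$ is $p_xq_y+q_xp_y$. This equals $1$ exactly when $x,y$ lie in different classes among (1)--(3), which is precisely the toggling rule.
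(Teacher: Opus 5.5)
Your proposal is correct and follows essentially the same route as the paper: you identify $\mathbf{A}_{G\wedge uv}$ with $\mathbf{A}_G*\{u,v\}$ via Lemma~\ref{lem:edge-pivot-ppt}, read off the $W\times W$ block as the Schur complement $\mathbf{A}_{G[W]}+pq^{\mathsf T}+qp^{\mathsf T}$, and then add the diagonal of $Q$ to both sides. Your extra remarks are correct and make explicit points the paper leaves implicit; these are the zero diagonal of $pq^{\mathsf T}+qp^{\mathsf T}$, the irrelevance of the $u,v$ relabelling, and the direct entrywise check against the toggling rule of Definition~\ref{def:edge-pivot}.
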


\begin{proof}
Order the vertices so that \(u\) and \(v\) come first, followed by the vertices of \(W\) in any fixed order. Then
\[
\mathbf A_G = \begin{pmatrix}
0 & 1 & p^{\mathsf T}\\
1 & 0 & q^{\mathsf T}\\
p & q & \mathbf A_{G[W]}
\end{pmatrix}.
\]
 The leading $2\times2$ block $B = \begin{pmatrix}0&1\\1&0\end{pmatrix}$ is nonsingular over $\GF(2)$ and satisfies $B^{-1}=B$. 
By Lemma~\ref{lem:edge-pivot-ppt}, $\mathbf A_{G\wedge uv} = \mathbf A_G * \{u,v\}$. Then the submatrix of \(\mathbf A_{G\wedge uv}\) indexed by \(W\) is the Schur complement of \(B\) in \(\mathbf A_G\). Hence
\[
\begin{aligned}
\mathbf A_{(G\wedge uv)[W]}
&= \mathbf A_{G[W]} - \begin{pmatrix}p & q\end{pmatrix} B^{-1} \begin{pmatrix}p^{\mathsf T}\\ q^{\mathsf T}\end{pmatrix} \\
&= \mathbf A_{G[W]} - (p q^{\mathsf T} + q p^{\mathsf T}) \\
&= \mathbf A_{G[W]} + p q^{\mathsf T} + q p^{\mathsf T}.
\end{aligned}
\]
Since $(G\wedge uv)[W] = (G\wedge uv)-\{u,v\}=G_{u,v}^{0}$, we have
\[
\mathbf A_{G_{u,v}^{0}} = \mathbf A_{G[W]} + p q^{\mathsf T} + q p^{\mathsf T}.
\]
Thus, for $Q\subseteq W$, \[
\mathbf A\bigl(G_{u,v}^{0}, Q\bigr)
= \mathbf A(G[W],Q) + p q^{\mathsf T}+q p^{\mathsf T}.
\]
\end{proof}

\begin{theorem}
	\label{thm:recurrence-edge}
	Let \(G\) be a simple graph and \(uv\in E(G)\). Then
	\[
	P_G^\times(z)
	=
	z P^\times_{G\underline{*}u}(z)
	+
	z^2\left(
	P_{G_{u,v}^{0}}^\times(z)
	+
	P_{G_{u,v}^{1}}^\times(z)
	\right).
	\]
\end{theorem}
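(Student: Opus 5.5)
We split the sum defining $P_G^\times(z)=\sum_{F\subseteq V(G)} z^{\operatorname{rank}\mathbf A(G,F)}$ into three parts, according to whether $u\in F$, or $u\notin F,\ v\notin F$, or $u\notin F,\ v\in F$. Each part should produce one term of the recurrence. Throughout, set $W=V(G)\setminus\{u,v\}$, write $p,q$ as in Lemma~\ref{lem:pivot-deletion-matrix}, and write $Q=F\cap W$.

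\emph{Part $u\in F$.} By Lemma~\ref{lem:local-complement-rank},
\[
\operatorname{rank}\mathbf A(G,F)=\operatorname{rank}\mathbf A\bigl(G\underline{*}u,\ (F\setminus\{u\})\Delta N_G(u)\bigr)+1 .
\]
As $F$ ranges over the subsets containing $u$, the set $F\setminus\{u\}$ ranges over all subsets of $V(G)\setminus\{u\}$. The map $X\mapsto X\Delta N_G(u)$ (restricted to $V(G)\setminus\{u\}$) is a bijection of that power set, so the label sets $(F\setminus\{u\})\Delta N_G(u)$ also run over all subsets exactly once. Hence this part equals $z\,P^\times_{G\underline{*}u}(z)$ directly from the definition of $P^\times$ as a sum over all label sets.

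\emph{Part $u,v\notin F$.} With $u,v$ ordered first, the matrix $\mathbf A(G,F)$ has leading block $B=\begin{pmatrix}0&1\\1&0\end{pmatrix}$, which is invertible. So $\operatorname{rank}\mathbf A(G,F)=2+\operatorname{rank}$ of the Schur complement. Exactly as in the proof of Lemma~\ref{lem:pivot-deletion-matrix}, that Schur complement is
\[
\mathbf A(G[W],Q)+pq^{\mathsf T}+qp^{\mathsf T}=\mathbf A(G^0_{u,v},Q).
\]
The added term does not change the diagonal, since its diagonal entries are $2p_xq_x=0$. Summing over all $Q\subseteq W$ gives $z^2P^\times_{G^0_{u,v}}(z)$.

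\emph{Part $u\notin F$, $v\in F$.} Now the leading block is $B'=\begin{pmatrix}0&1\\1&1\end{pmatrix}$, with $B'^{-1}=\begin{pmatrix}1&1\\1&0\end{pmatrix}$. The Schur complement becomes
\[
M_Q:=\mathbf A(G[W],Q)+pp^{\mathsf T}+pq^{\mathsf T}+qp^{\mathsf T},
\]
and $\operatorname{rank}\mathbf A(G,F)=2+\operatorname{rank}M_Q$. It remains to identify $M_Q$ as a graft matrix of $G^1_{u,v}$.

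This identification is the step needing the most care. First, in $G*u$ the vector for $u$ is still $p$. The vector for $v$ becomes $p+q$: indeed $v\in N_G(u)$, so $v$'s adjacencies to the other neighbours of $u$ are toggled. The adjacency matrix of $(G*u)[W]$ is $\mathbf A_{G[W]}+pp^{\mathsf T}$ with the diagonal cleared, and $uv$ is still an edge. Next, applying Lemma~\ref{lem:pivot-deletion-matrix} to $G*u$, and using that $2pp^{\mathsf T}=0$ over $\GF(2)$, the off-diagonal part of $\mathbf A_{G^1_{u,v}}$ is that of
\[
\mathbf A_{G[W]}+pp^{\mathsf T}+p(p+q)^{\mathsf T}+(p+q)p^{\mathsf T}=\mathbf A_{G[W]}+pp^{\mathsf T}+pq^{\mathsf T}+qp^{\mathsf T}.
\]
This agrees with the off-diagonal part of $M_Q$. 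On the diagonal, $M_Q$ has entry $[x\in Q]+p_x$. Therefore
\[
M_Q=\mathbf A\bigl(G^1_{u,v},\,Q\Delta(N_G(u)\cap W)\bigr).
\]
Since $Q\mapsto Q\Delta(N_G(u)\cap W)$ is a bijection on subsets of $W$, this part equals $z^2P^\times_{G^1_{u,v}}(z)$.

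Adding the three parts gives the recurrence.
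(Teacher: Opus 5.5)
Your proof is correct and follows essentially the same route as the paper: you split according to whether $u$ and $v$ lie in $F$, apply Lemma~\ref{lem:local-complement-rank} when $u\in F$, and otherwise take the Schur complement of the leading $2\times 2$ block, shifting the label set by $N_G(u)\cap W$ in the last case. You are slightly more careful than the paper on two points: you check explicitly that $v$'s neighbourhood vector in $G*u$ is $p+q$, so that Lemma~\ref{lem:pivot-deletion-matrix} applied to $G*u$ still gives $pq^{\mathsf T}+qp^{\mathsf T}$ (the paper uses this implicitly), and you use a direct bijection where the paper appeals to the independence of the initial label set.
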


\begin{proof}
Since
\[
P_G^\times(z)
=
\sum_{F\subseteq V(G)}
z^{\operatorname{rank}\mathbf{A}(G,F)},
\]
we split the sum according to whether \(u\in F\). Write
\[
P_G^\times(z)=\Sigma_1+\Sigma_0,
\]
where
\[
\Sigma_1=
\sum_{\substack{F\subseteq V(G)\\ u\in F}}
z^{\operatorname{rank}\mathbf{A}(G,F)},
\qquad
\Sigma_0=
\sum_{\substack{F\subseteq V(G)\\ u\notin F}}
z^{\operatorname{rank}\mathbf{A}(G,F)}.
\]

\noindent{\bf Computation of \(\Sigma_1\).} 
Let \(F\subseteq V(G)\) with \(u\in F\). By Lemma~\ref{lem:local-complement-rank} and the definition of local complementation deletion,
\[
\operatorname{rank}\mathbf{A}(G,F)=\operatorname{rank}\mathbf{A}\bigl(G\underline{*}u,\,(F\setminus\{u\})\Delta N_G(u)\bigr)+1.
\]
Hence, setting \(R=F\setminus\{u\}\),
\[
\Sigma_1 = z \sum_{R\subseteq V(G)\setminus\{u\}} z^{\operatorname{rank}\mathbf{A}(G\underline{*}u,\,R\Delta N_G(u))}
= z P_{(G\underline{*}u,N_G(u))}^{\times}(z)
= z P_{G\underline{*}u}^{\times}(z).
\]

\noindent{\bf Computation of \(\Sigma_0\).}  Suppose \(u\notin F\). Set
\(W=V(G)\setminus\{u,v\}\). Let \(p,q\in \GF(2)^W\) be column vectors satisfying: for all \(x\in W\), \(p_x=1\) if and only if \(x\) is adjacent to \(u\) in \(G\), and \(q_x=1\) if and only if \(x\) is adjacent to \(v\) in \(G\). Set
\(R=F\setminus\{v\}\),
and define
\[
\epsilon=
\begin{cases}
0, & v\notin F,\\
1, & v\in F.
\end{cases}
\]
Then
\[
\mathbf{A}(G,F)=
\begin{pmatrix}
0 & 1 & p^{\mathsf T}\\
1 & \epsilon & q^{\mathsf T}\\
p & q & \mathbf{A}(G[W],R)
\end{pmatrix}
=
\begin{pmatrix}
B_\epsilon & C\\
D & \mathbf{A}(G[W],R)
\end{pmatrix},
\]
where
\[
B_\epsilon=\begin{pmatrix}0&1\\1&\epsilon\end{pmatrix},\quad
C=\begin{pmatrix}p^{\mathsf T}\\ q^{\mathsf T}\end{pmatrix},\quad
D=\begin{pmatrix}p&q\end{pmatrix}.
\]

Then \(B_\epsilon\) is invertible over \(\GF(2)\), and
\(B_\epsilon^{-1}=\begin{pmatrix}\epsilon&1\\1&0\end{pmatrix}\).
By the Schur complement rank formula,
\[
\operatorname{rank}\mathbf{A}(G,F)=2+\operatorname{rank}M_\epsilon,
\] where \[M_\epsilon=\mathbf{A}(G[W],R)+DB_\epsilon^{-1}C.\]
Thus
\[
M_\epsilon=\mathbf{A}(G[W],R)+\epsilon pp^{\mathsf T}+pq^{\mathsf T}+qp^{\mathsf T}.
\]

We now distinguish the two possible values of \(\epsilon\). If \(v\notin F\), then \(\epsilon=0\). Hence
\[
M_0=\mathbf{A}(G[W],R)+pq^{\mathsf T}+qp^{\mathsf T}.
\]
By Lemma~\ref{lem:pivot-deletion-matrix}, we have
\(M_0
=
\mathbf{A}(G_{u,v}^{0},R).
\)
Since
\(
\operatorname{rank}\mathbf{A}(G,F)
=
2+\operatorname{rank}M_0,
\)
the contribution of all subsets \(F\) with \(u\notin F\) and
\(v\notin F\) is
\(
z^2P_{G_{u,v}^{0}}^\times(z).
\)

It remains to consider the case \(v\in F\). Then \(\epsilon=1\), and
\[
M_1=\mathbf{A}(G[W],R)+pp^{\mathsf T}+pq^{\mathsf T}+qp^{\mathsf T}.
\]
Let
\(N=N_G(u)\setminus\{v\}\).
Since \(p_x=1\) if and only if \(x\in N\), the off-diagonal part of
\(pp^{\mathsf T}\) toggles precisely the adjacencies inside \(N\). This is exactly
the effect of the local complementation at \(u\), restricted to \(W\). Moreover, the diagonal part of \(pp^{\mathsf T}\) changes
exactly the diagonal entries corresponding to vertices in \(N\). Hence
\[
\mathbf{A}(G[W],R)+pp^{\mathsf T}
=
\mathbf{A}((G*u)[W],R\Delta N).
\]
By Lemma~\ref{lem:pivot-deletion-matrix}, we have 
\[
\begin{aligned}
M_1&=\mathbf{A}(G[W],R)+pp^{\mathsf T}+pq^{\mathsf T}+qp^{\mathsf T}\\
&=\mathbf{A}((G*u)[W],R\Delta N)+pq^{\mathsf T}+qp^{\mathsf T}\\
&=
\mathbf{A}(G_{u,v}^{1},R\Delta N).
\end{aligned}
\]
Since \(\operatorname{rank}\mathbf{A}(G,F)
=2+\operatorname{rank}M_1,\)
the contribution of all subsets \(F\) with \(u\notin F\) and \(v\in F\)
is \( z^2P_{G_{u,v}^{1}}^\times(z).\)

Combining the three contributions, we get
\[
P_G^\times(z)
=
z P^\times_{G\underline{*}u}(z)
+
z^2\left(
P_{G_{u,v}^{0}}^\times(z)
+
P_{G_{u,v}^{1}}^\times(z)
\right).
\]
\end{proof}

\begin{remark}\label{rem:leaf}
Let \(u\) be a leaf of \(G\) with \(N_G(u)=\{v\}\). Then \(N_G(u)\setminus\{v\}=\emptyset\), and by definition we have
\[
G_{u,v}^{0}=G_{u,v}^{1}=G\setminus\{u,v\},\qquad G\underline{*}u = G\setminus \{u\}.
\]
Consequently, Theorem~\ref{thm:recurrence-edge} reduces to
\[
\ptr_G(z)=z\,\ptr_{G\setminus \{u\}}(z)+2z^2\ptr_{G\setminus\{u,v\}}(z),
\]
which is exactly the leaf-reduction formula obtained by Deng, Jin and Yan \cite{DengJinYan2026}. Thus Theorem~\ref{thm:recurrence-edge} extends this leaf formula to vertices of arbitrary degree.
\end{remark}

\begin{example}\label{ex:complete-graphs}
Let \(E_m\) denote the edgeless graph on \(m\) vertices. Let \(K_n\) be the complete graph on \(n\geq 2\) vertices and let \(uv\) be any edge. Then
\[
(K_n)\underline{*}u = E_{n-1},\qquad
(K_n)_{u,v}^{0}=K_{n-2},\qquad
(K_n)_{u,v}^{1}=E_{n-2}.
\]
Moreover, \(\ptr_{E_{m}}(z)=(1+z)^m\). By Theorem~\ref{thm:recurrence-edge}, we have
\[
\ptr_{K_n}(z)=z(1+z)^{n-1}+z^2\bigl(\ptr_{K_{n-2}}(z)+(1+z)^{n-2}\bigr),
\]
with initial values \(\ptr_{K_1}(z)=1+z\) and \(\ptr_{K_2}(z)=z+3z^2\). Thus
\[
\ptr_{K_n}(z)
=
\begin{cases}
\displaystyle
z^n+\sum_{i=1}^{n}\binom{n}{\,n+1-i\,}z^i,
& n\text{ even},\\[1.2ex]
\displaystyle
z^{n-1}+\sum_{i=1}^{n}\binom{n}{\,n+1-i\,}z^i,
& n\text{ odd}.
\end{cases}
\]
This agrees with the formula for complete graphs obtained by Yan and Li
\cite[Theorem~16]{YanLi2025}. 
\end{example}

\section{Comparing the lowest and highest coefficients}

For a simple graph \(G\), let \(\rho(G)\) be the lowest degree of \(\ptr_G(z)\). Denote by \(\ell_G\) the coefficient of this lowest-degree term and by \(h_G\) the coefficient of the highest-degree term.

\begin{lemma}[{\cite{DengJinYan2026}}]\label{lem:highest-degree}
Let $G$ be a simple graph with $n$ vertices. Then the highest degree of $\ptr_{G}(z)$ is $n$.
\end{lemma}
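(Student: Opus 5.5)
We must show that the highest degree of \(\ptr_G(z)\) is \(n=|V(G)|\). Equivalently: every matrix \(\mathbf{A}(G,F)\) has rank at most \(n\), and at least one choice of \(F\subseteq V(G)\) makes \(\mathbf{A}(G,F)\) nonsingular over \(\GF(2)\). The upper bound is immediate, since \(\mathbf{A}(G,F)\) is \(n\times n\). The content is therefore the existence of a nonsingular diagonal perturbation of \(\mathbf{A}_G\). Note that it is not enough to say the coefficient sum is positive; we need some \(F\) to actually achieve rank \(n\).

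I would argue by induction on \(n\). The case \(n=0\) is trivial (the polynomial is \(1=z^0\)), and for \(n=1\) we take \(F=V(G)\).

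For \(n\ge 1\), pick any vertex \(u\) and consider \(F\) with \(u\in F\). By Lemma~\ref{lem:local-complement-rank},
\[
\operatorname{rank}\mathbf{A}(G,F)=\operatorname{rank}\mathbf{A}\bigl((G,F)\underline{*}u\bigr)+1,
\]
where \((G,F)\underline{*}u=\bigl(G\underline{*}u,(F\setminus\{u\})\Delta N_G(u)\bigr)\) is a graft on \(n-1\) vertices. As \(F\) ranges over the sets containing \(u\), the set \((F\setminus\{u\})\Delta N_G(u)\) ranges over all subsets of \(V(G)\setminus\{u\}\). By induction, some subset \(R\) makes \(\mathbf{A}(G\underline{*}u,R)\) nonsingular, of rank \(n-1\). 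Taking \(F=\{u\}\cup (R\Delta N_G(u))\) then gives rank \(n\).

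Equivalently, this is just the identity \(\Sigma_1=zP^\times_{G\underline{*}u}(z)\) from the proof of Theorem~\ref{thm:recurrence-edge}, which holds for any vertex \(u\), not only an endpoint of an edge. So the coefficient of \(z^n\) in \(\ptr_G\) is at least the coefficient of \(z^{n-1}\) in \(P^\times_{G\underline{*}u}\), which is positive by induction.

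Alternatively, there is a direct argument avoiding induction: \(\det\mathbf{A}(G,F)\), viewed as a polynomial in the diagonal indicator variables \(x_v\) (\(x_v=1\) iff \(v\in F\)), is multilinear and contains the monomial \(\prod_v x_v\) with coefficient \(1\). A nonzero multilinear polynomial over \(\GF(2)\) has a nonvanishing point, so some \(F\) gives determinant \(1\). I would present the inductive argument and perhaps mention this one.

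There is no real obstacle here. The only point needing care is the bijection \(R\mapsto R\Delta N_G(u)\), which ensures that induction applied to \(G\underline{*}u\) with an arbitrary diagonal set transfers back to some valid \(F\) containing \(u\).
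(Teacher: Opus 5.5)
Your proof is correct. The paper gives no proof of this lemma to compare against: it is imported from Deng, Jin and Yan and used only in the proof of Theorem~\ref{thm:extremal-coefficients}. So the question is whether your argument is sound using the paper's tools, and it is.

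\textbf{Inductive route.} This is legitimate inside the paper and does not create a circularity. The identity $\Sigma_1=zP^\times_{G\underline{*}u}(z)$ depends only on Lemma~\ref{lem:local-complement-rank}, hence on Proposition~\ref{prop:corank-invariance}. It uses neither the edge $uv$ nor the highest-degree lemma, so it holds for every vertex $u$, as you say. The bijection $R\mapsto R\Delta N_G(u)$ and the choice $F=\{u\}\cup(R\Delta N_G(u))$ are exactly what is needed, since then $(F\setminus\{u\})\Delta N_G(u)=R$. The argument also gives the slightly stronger bound $h_G\ge h_{G\underline{*}u}$ for every vertex $u$. This fits naturally with how the lemma is later used alongside Theorem~\ref{thm:recurrence-edge}.

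\textbf{Determinant route.} This one is independent of the graft and local-complementation machinery. It needs only two facts:
\begin{enumerate}
\item[(i)] each $x_v$ occurs in a single entry, so $\det\mathbf{A}(G,F)$ is multilinear in the $x_v$;
\item[(ii)] only the identity permutation produces $\prod_v x_v$.
\end{enumerate}
Pushed one step further, it proves more. By M\"obius inversion over $\GF(2)$, the coefficient of $\prod_v x_v$ equals $\sum_{F\subseteq V(G)}\det\mathbf{A}(G,F)$. Hence the number of $F$ with $\mathbf{A}(G,F)$ nonsingular is odd, that is, $h_G$ is always odd. This agrees with Example~\ref{ex:complete-graphs}, where $h_{K_n}=n+1$ for even $n$ and $h_{K_n}=n$ for odd $n$.

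Either argument suffices. The inductive one matches the paper's recurrence framework; the determinant one is shorter and also carries the parity information.
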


\begin{theorem}\label{thm:extremal-coefficients}
	Let \(G\) be a simple graph. Then
	\[
	\ell_{G}\leq h_{G}.
	\]
	Moreover, $\ell_{G}= h_{G}$ if and only if \(G\) is edgeless.
\end{theorem}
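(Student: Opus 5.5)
The plan is to argue by induction on \(n=|V(G)|\), using Theorem~\ref{thm:recurrence-edge} to compare the extremal coefficients of \(\ptr_G\) with those of three smaller graphs.

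\textbf{Base case and edgeless graphs.} If \(G\) is edgeless on \(m\) vertices (including \(m=0\), where \(\ptr_G=1\)), then \(\ptr_G(z)=(1+z)^m\). Hence \(\ell_G=h_G=1\), which settles the ``if'' direction of the equality statement. We also record that \(h_H\ge 1\) and \(\ell_H\ge 1\) for every graph \(H\), since both are coefficients of terms that actually occur.

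\textbf{Induction step.} Let \(G\) have an edge \(uv\), and write \(H_1=G\underline{*}u\), \(H_2=G^0_{u,v}\) and \(H_3=G^1_{u,v}\). These have \(n-1\), \(n-2\) and \(n-2\) vertices respectively.

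\emph{Highest coefficients.} By Lemma~\ref{lem:highest-degree}, the top degrees of \(z\ptr_{H_1}\), \(z^2\ptr_{H_2}\) and \(z^2\ptr_{H_3}\) all equal \(n\). Their leading coefficients are positive, so no cancellation occurs, and
\[
h_G=h_{H_1}+h_{H_2}+h_{H_3}.
\]

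\emph{Lowest coefficients.} All coefficients are nonnegative, so again there is no cancellation. Hence
\[
\rho(G)=\min\{1+\rho(H_1),\,2+\rho(H_2),\,2+\rho(H_3)\},
\]
and \(\ell_G\) is the sum of \(\ell_{H_i}\) over those indices \(i\) attaining this minimum.

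\emph{Combining.} The induction hypothesis gives \(\ell_{H_i}\le h_{H_i}\) for each \(i\). Therefore
\[
\ell_G\le \sum_{i\ \text{attaining}} h_{H_i}\le h_G,
\]
which proves the inequality.

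\textbf{Strictness when \(G\) has an edge.} I would show that one of the two inequalities above is strict.

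If some \(H_i\) does not attain the minimum, the second inequality is strict, because \(h_{H_i}\ge 1\) is omitted from the sum.

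Otherwise all three attain the minimum. If some \(H_i\) has an edge, the induction hypothesis gives \(\ell_{H_i}<h_{H_i}\), so the first inequality is strict.

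It remains to rule out the case where all three attain the minimum and all three are edgeless. Edgeless graphs satisfy \(\rho=0\), since \(F=\emptyset\) gives the zero matrix. The three candidate values would then be \(1\), \(2\) and \(2\), which are not all equal. This contradicts the assumption that all three attain the minimum.

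\textbf{Main obstacle.} The only delicate point is justifying that no cancellation occurs in either extremal coefficient, and checking the degenerate cases. Cancellation is excluded because all coefficients are nonnegative counts. For the degenerate cases: when \(n=2\), the graphs \(H_2\) and \(H_3\) are the empty graph with \(\ptr=1\), so \(\rho=0\) and \(h=\ell=1\), consistent with the above. The key structural point is that the \(z^1\) shift for \(H_1\) and the \(z^2\) shift for \(H_2,H_3\) differ, which forces a strict gain once \(G\) has an edge.
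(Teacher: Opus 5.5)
Your proposal is correct and follows essentially the same route as the paper. Both argue by induction on $|V(G)|$ via the edge recurrence and use the identity $h_G=h_{G\underline{*}u}+h_{G_{u,v}^{0}}+h_{G_{u,v}^{1}}$. Strictness then comes either from a constituent term that misses the lowest degree, or, when all three contribute, from the $z$ versus $z^2$ shift forcing one smaller graph to have an edge. The paper identifies that graph directly as $G\underline{*}u$, since $\rho(G\underline{*}u)=\rho(G_{u,v}^{0})+1\geq 1$.

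The remaining differences are cosmetic. You bound the omitted term by $h_{H_i}\geq 1$ rather than $\ell_{H_i}\geq 1$, and you rule out the all-edgeless case by contradiction.
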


\begin{proof}
	If \(G\) is edgeless, then \(\ptr_G(z)=(1+z)^{|V(G)|}\), so \(\ell_G=h_G=1\). It remains to prove that \(\ell_G<h_G\) whenever \(G\) has at least one edge. 
    
    We prove this by induction on \(n=|V(G)|\). The cases \(n=0\) and \(n=1\) are vacuous. Let \(n\geq 2\), and assume that the strict inequality holds for every simple graph with fewer than \(n\) vertices and at least one edge. Together with the edgeless case, this implies that for any graph with fewer than \(n\) vertices, we have \(\ell\leq h\), with strict inequality whenever the graph has at least one edge.

	Now let \(G\) be a simple graph on \(n\) vertices with at least one edge.
Choose $uv\in E(G)$.
By Theorem~\ref{thm:recurrence-edge},
\[
P_G^\times(z)
=
z P^\times_{G\underline{*}u}(z)
+
z^2\left(
P_{G_{u,v}^{0}}^\times(z)
+
P_{G_{u,v}^{1}}^\times(z)
\right).
\]
The graph \(G\underline{*}u\) has \(n-1\) vertices, while
\(G_{u,v}^{0}\) and \(G_{u,v}^{1}\) have \(n-2\) vertices. By Lemma~\ref{lem:highest-degree}, after multiplying by \(z\) and \(z^2\) respectively, the three constituent terms on the right-hand side all have highest degree \(n\).
	Since all coefficients are nonnegative, no cancellation occurs in the
	highest-degree term. Hence
	\[
	h_G
	=
	h_{G\underline{*}u}
	+
	h_{G_{u,v}^{0}}
	+
	h_{G_{u,v}^{1}}.
	\]
    
	Let
	\(d_1=\rho(G\underline{*}u)+1,
	d_2=\rho(G_{u,v}^{0})+2\), and \(
	d_3=\rho(G_{u,v}^{1})+2.
	\)
	These are precisely the lowest degrees of the three constituent terms in the
	recurrence formula.
    
	 If \(d_1,d_2\) and \(d_3\) are not all equal, then at least one constituent term does
	not contribute to the lowest-degree term of \(P_G^\times(z)\). Since each
	lowest-degree coefficient is positive, we have
	\[
	\ell_G
	<
	\ell_{G\underline{*}u}
	+
	\ell_{G_{u,v}^{0}}
	+
	\ell_{G_{u,v}^{1}}.
	\]
	By the induction hypothesis together with the edgeless case,
	\[
	\ell_{G\underline{*}u}\leq h_{G\underline{*}u},\qquad
	\ell_{G_{u,v}^{0}}\leq h_{G_{u,v}^{0}},\qquad
	\ell_{G_{u,v}^{1}}\leq h_{G_{u,v}^{1}}.
	\]
	Therefore
	\[
	\ell_G
	<
	h_{G\underline{*}u}
	+
	h_{G_{u,v}^{0}}
	+
	h_{G_{u,v}^{1}}
	=
	h_G.
	\]
	
	It remains to consider the case \(
	d_1=d_2=d_3.\)
	Then
	\[
	\rho(G\underline{*}u)+1
	=
	\rho(G_{u,v}^{0})+2
	=
	\rho(G_{u,v}^{1})+2,
	\]
	and hence
	\[
	\rho(G\underline{*}u)
	=
	\rho(G_{u,v}^{0})+1
	=
	\rho(G_{u,v}^{1})+1.
	\]
	Since $\rho(G_{u,v}^{0})\geq0$, we have
	\(
	\rho(G\underline{*}u)\geq 1.
	\)
	Suppose \(G\underline{*}u\) is edgeless. Then 
	\[
	P^\times_{G\underline{*}u}(z)
	=
	(1+z)^{|V(G\underline{*}u)|},
	\]
	and hence \(\rho(G\underline{*}u)=0\), a contradiction. Thus \(G\underline{*}u\) contains at least one edge. Since
	\(G\underline{*}u\) has fewer than \(n\) vertices, the induction hypothesis
	gives
	\(
	\ell_{G\underline{*}u}<h_{G\underline{*}u}.
	\)
	In the present case, all three constituent terms contribute to the
	lowest-degree term of \(P_G^\times(z)\). Hence
	\[
	\ell_G
	=
	\ell_{G\underline{*}u}
	+
	\ell_{G_{u,v}^{0}}
	+
	\ell_{G_{u,v}^{1}}.
	\]
	Together with
	\[
	\ell_{G\underline{*}u}<h_{G\underline{*}u},\qquad
	\ell_{G_{u,v}^{0}}\leq h_{G_{u,v}^{0}},\qquad
	\ell_{G_{u,v}^{1}}\leq h_{G_{u,v}^{1}},
	\]
	we obtain
	\[
	\ell_G
	<
	h_{G\underline{*}u}
	+
	h_{G_{u,v}^{0}}
	+
	h_{G_{u,v}^{1}}
	=
	h_G.
	\]
    
	Thus \(\ell_G<h_G\) whenever \(G\) has at least one edge.
\end{proof}

\begin{remark}
Theorem~\ref{thm:extremal-coefficients} implies that \(\ptr_G(z)\) is symmetric (i.e., its coefficients from the lowest degree to the highest degree form a palindromic sequence) if and only if \(G\) is edgeless.
\end{remark}

\section*{Acknowledgements}
This work is supported by NSFC (Nos. 12471326, 12401474).

\end{document}